\newcommand{\CM}{Cohen-Macaulay}
\newcommand{\wrt}{with respect to}
\newcommand{\F}{\mathbb{F} }
\newcommand{\G}{\mathbb{G} }
\newcommand{\n}{\mathfrak{n} }
\newcommand{\m}{\mathfrak{m} }
\newcommand{\cf}{\mathbf{f} }
\newcommand{\Z}{\mathbb{Z} }
\newcommand{\rt}{\rightarrow}
\newcommand{\ov}{\overline}
\newcommand{\Bf}{\mathbf{f} }
\newcommand{\Bg}{\mathbf{g} }
\newcommand{\wt}{\widetilde }
\newcommand{\depth}{\operatorname{depth}}
\newcommand{\ord}{\operatorname{ord}}
\newcommand{\ann}{\operatorname{ann}}
\newcommand{\projdim}{\operatorname{projdim}}
\newcommand{\coker}{\operatorname{coker}}
\newcommand{\cx}{\operatorname{cx}}
\newcommand{\Syz}{\operatorname{Syz}}
\newcommand{\Ext}{\operatorname{Ext}}
\newcommand{\codim}{\operatorname{codim}}
\newcommand{\Tor}{\operatorname{Tor}}
\theoremstyle{plain}
\newtheorem{theorem}{Theorem}[section]
\theoremstyle{definition}
\newtheorem{example}[theorem]{Example}
\theoremstyle{remark}
\begin{document}

\title[Resolutions]{Resolutions over strict complete intersections}
\author{Tony~J.~Puthenpurakal}
\date{\today}
\address{Department of Mathematics, IIT Bombay, Powai, Mumbai 400 076}

\email{tputhen@math.iitb.ac.in}
\subjclass{Primary  13D02; Secondary 13C14}
\keywords{ complexity, complete intersections, ideals of minors, resolutions}

 \begin{abstract}
Let $(Q, \n)$ be a regular local ring and let $f_1, \ldots, f_c \in \n^2$ be a $Q$-regular sequence. Set $(A, \m) = (Q/(\cf), \n/(\cf))$. Further assume that the initial forms
 $f_1^*, \ldots, f_c^*$ form a $G(Q) = \bigoplus_{n \geq 0}\n^i/\n^{i+1}$-regular sequence.  Without loss of any generality assume
$\ord_Q(f_1) \geq \ord_Q(f_2) \geq \cdots \geq \ord_Q(f_c)$. Let $M$ be a finitely generated $A$-module and let $(\F, \partial)$ be a minimal free resolution of $M$. Then we prove that $\ord(\partial_i) \leq \ord_Q(f_1) - 1$ for all $i \gg 0$. We also construct an MCM $A$-module $M$ such that $\ord(\partial_{2i+1}) = \ord_Q(f_1) - 1$ for all $i \geq 0$.
We also give a considerably simpler proof regarding the periodicity of ideals of minors of maps in a  minimal free resolution of modules over arbitrary complete intersection rings (not necessarily strict).
\end{abstract}
 \maketitle
\section{introduction}
Study of resolutions is an important aspect of commutative algebra and algebraic geometry. Nevertheless many questions remain open. We will assume that the Noetherian local ring $(A, \m)$ is not regular. We investigate minimal free resolutions $\F$ of a finitely generated $A$-module $M$ with infinite projective dimension.
In the artice \cite{Ainf},  Avramov writes:
\emph{A striking aspect of infinite resolutions is the asymptotic
stability that they display, both numerically (uniform patterns of Betti numbers)
and structurally (high syzygies have similar module-theoretic properties). In many
cases this phenomenon can be traced to a simple principle: the beginning of a resolution
is strongly influenced by the defining relations of the module, that can be
arbitrarily complicated; far from the source, the relations of the ring (thought of
as a residue of some regular local ring) take over. In other words, the singularity
of the ring dominates asymptotic behavior.}

Perhaps the case when $A$ is a complete intersection is most studied. Let $M$ be a finitely generated $A$-module. Let $(\F, \partial)$ be a minimal free resolution of $M$. Let $I^r_i(M)$ be the ideal generated by $r \times r$ minors of $\partial_i$.  In the nice paper \cite[1.2]{bds} it is proved that $I^r_i(M) = I^r_{i+2}(M)$ for all $r \geq 1$ and $i \gg 0$. In particular $I^1_i(M) = I^1_{i+2}(M)$ for all $i \gg 0$. In this paper we investigate the order of this ideal when $A$ is a strict complete intersection.

Let $(A,\m)$ be a Noetherian local ring with residue field $k$. For $x \in \m^i \setminus \m^{i+1} $ set $\ord_A(x) = i$. Set $\ord_A(0) = + \infty$. If $I$ is a non-zero ideal in $A$ set
$\ord_A(I) = \max\{ i \mid I \subseteq \m^i \}$. Set $\ord_A  \{ 0 \} = + \infty$. Let $F, G$ be finite free $A$-modules and let $\phi \colon F \rt G$ be an $A$-linear map. Express $\phi$ as a matrix $(a_{ij})$. If $\phi \neq 0$ then set $\ord_A (\phi) = \max \{ l \mid a_{ij} \subseteq \m^l \ \text{for all} \ i,j \}$. We note that $\ord(\phi)$ only depends on $\phi$ and is independent of the bases of $F, G$ taken to express $\phi$ as a matrix.
If $\phi = 0$ set $\ord_A (\phi) = + \infty$. If $M$ is a finitely generated $A$-module and if $(\F, \partial)$ and $(\F^\prime, \partial^\prime)$ are two minimal resolutions of $M$ then note that
$\ord_A(\partial_i) = \ord_A(\partial^\prime_i)$ for all $i \geq 1$.

 Let $\mu(M)$ denote minimal number of generators of an $A$-module $M$ and let $\ell(M)$ denote its length. Let $\codim(A) = \mu(\m) - d$ denote the codimension of $A$.
Let $G(A) = \bigoplus_{n\geq 0}\m^n/\m^{n+1}$ be the associated graded ring of $A$ (\wrt \ $\m$) and let $G(M) = \bigoplus_{\n\geq 0}\m^nM/\m^{n+1}M$ be the associated graded module of $M$ considered as a $G(A)$-module.

\s \label{setup} Recall a local ring $A$ is said to be a strict complete intersection if the associated graded ring $G(A)$  of $A$ is a complete intersection. Note a strict complete intersection is a complete intersection but the converse need not hold. Assume  a strict complete intersection $A$ is a quotient of a regular local ring $(Q, \n)$. Then we can show that there exists a $Q$-regular sequence $f_1, \ldots, f_c$ such that $A = Q/(\cf)$ and the initial forms $f_1^*, \ldots, f_c^*$ is a $G(Q)$-regular sequence. In this case we have
$G(A) = G(Q)/(\cf^*)$. We may assume without loss of generality that $\ord_Q(f_1) \geq \ord_Q(f_2) \geq \cdots \geq \ord_Q(f_c)$. The main result of this paper is

\begin{theorem}\label{main}(with hypotheses as in \ref{setup}). Let $M$ be a finitely generated $A$-module and let $(\F, \partial)$ be a minimal free resolution of $M$ as an $A$-module. Then for all $i \gg 0$ we have $\ord_A(\partial_i) \leq \ord_Q(f_1) - 1$.
\end{theorem}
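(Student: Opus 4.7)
The plan is to reduce to the hypersurface case $c = 1$ by lifting $\F$ to a complex of free $Q$-modules and exploiting the fact that $f_1^*, \ldots, f_c^*$ form a $G(Q)$-regular sequence. For orientation, I first dispose of $c = 1$: here $A = Q/(f_1)$ is a strict hypersurface and, by Eisenbud's matrix factorization theorem, for $i \gg 0$ the syzygies of $M$ are maximal Cohen--Macaulay and the lifted differentials satisfy $\wt{\partial}_{i-1}\wt{\partial}_i = f_1 \cdot \mathrm{id}$ over $Q$. Minimality forces $\ord_Q(\wt{\partial}_i), \ord_Q(\wt{\partial}_{i-1}) \geq 1$, and the sub-additivity $\ord_Q(AB) \geq \ord_Q(A) + \ord_Q(B)$ applied to $\wt{\partial}_{i-1}\wt{\partial}_i$ gives $\ord_Q(\wt{\partial}_i) + \ord_Q(\wt{\partial}_{i-1}) \leq \ord_Q(f_1 \cdot \mathrm{id}) = a$, so both are at most $a-1$. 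Since the equality $\m^r = \n^r A$ and its compatibility with order are routine, this bound transfers to $\ord_A(\partial_i) \leq a - 1$ for all $i \gg 0$.

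For general $c$, lift the minimal resolution to $\wt{\F}$ over $Q$ and choose the lift so that $\ord_Q(\wt{\partial}_i) = \ord_A(\partial_i) \geq 1$. The identity $\partial^2 = 0$ lifts to $\wt{\partial}_{i-1}\wt{\partial}_i = \sum_{j=1}^c f_j T_{i,j}$ for Eisenbud-style operators $T_{i,j} \colon \wt{F}_i \to \wt{F}_{i-2}$. Suppose, for contradiction, that $\ord_A(\partial_i) \geq a$ for some large $i$. The left-hand side then has $\ord_Q \geq a + 1$, so every entry of the right-hand side lies in $\n^{a+1}$. Writing $t_{i,j}^*$ for the degree-$(a-a_j)$ initial form of the appropriate entry of $T_{i,j}$, we obtain relations $\sum_j f_j^* \, t_{i,j}^* = 0$ in $G(Q)_a$. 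Because $f_1^*, \ldots, f_c^*$ is a $G(Q)$-regular sequence, the first Koszul syzygies describe all such relations, and each $t_{i,j}^*$ must lie in the ideal $(f_k^* : k \neq j)$ of $G(Q)$. Using the (Koszul) syzygies of $f_1, \ldots, f_c$ in $Q$ to modify the tuple $(T_{i,j})_j$ without changing the sum $\sum_j f_j T_{i,j}$, we can arrange $\ord_Q(T_{i,j}) \geq a - a_j + 1$.

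The main obstacle is bootstrapping this one-step improvement of the $T_{i,j}$ into genuine information on $\ord_Q(\wt{\partial}_i)$. My plan is to use the commutation identities $\wt{\partial}_{i-2} T_{i,j} - T_{i-1,j}\wt{\partial}_i = \sum_k f_k S_{i,j,k}$, together with the full Eisenbud--Shamash system of higher homotopies $\sigma_\alpha$ indexed by $\alpha \in \Z_{\geq 0}^c$, to iterate the Koszul-syzygy argument of the previous paragraph. Each iteration pushes the resulting operators one step deeper into $\n$; combined with the periodicity $I^1_i(M) = I^1_{i+2}(M)$ from \cite{bds} (which ensures that the putative phenomenon $\ord_A(\partial_i) \geq a$ persists along an entire arithmetic progression of indices), this should eventually force $\ord_Q(\wt{\partial}_i) = \infty$, i.e., $\wt{\partial}_i = 0$, yielding $\projdim_A M < \infty$ and a contradiction. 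The delicate point, where the \emph{strict} hypothesis (as opposed to a mere complete-intersection hypothesis) is essential, is synchronising the several orders $a_1 \geq \cdots \geq a_c$ with the Koszul relations on $f_1^*, \ldots, f_c^*$: this synchronisation requires that the initial forms remain a regular sequence in $G(Q)$, and it is exactly what fails in the non-strict case.
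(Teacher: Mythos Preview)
Your hypersurface case $c=1$ is fine, but for $c\geq 2$ the proposal is a programme rather than a proof, and I do not see how to close it along the lines you sketch. The concrete obstruction is this: after your Koszul--syzygy modification you have $\ord_Q(T_{i,j})\geq a-a_j+1$, so each summand $f_jT_{i,j}$ lies in $\n^{a+1}$, but this merely reproduces the bound $\ord_Q(\wt\partial_{i-1}\wt\partial_i)\geq a+1$ you began with. To iterate and gain another power of $\n$ you would need $\ord_Q(\wt\partial_{i-1})\geq 2$, yet the periodicity $I^1_i=I^1_{i+2}$ controls only one parity and says nothing about $\partial_{i-1}$. More seriously, the identity $\wt\partial_{i-1}\wt\partial_i=\sum_j f_jT_{i,j}$ and the higher homotopy relations only constrain \emph{products} and commutators of the $\wt\partial$'s with the $T$'s; none of them lets you solve for $\wt\partial_i$ alone. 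Pushing every $T_{i,j}$ and every $\sigma_\alpha$ arbitrarily deep into $\n$ is perfectly compatible with $\wt\partial_i$ remaining at order exactly $a$, so the step ``eventually force $\ord_Q(\wt\partial_i)=\infty$'' is unsupported.

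The paper takes a different route that avoids this difficulty. It inducts on $\cx_A M$ rather than on $c$. For $\cx_A M=1$ it replaces $\Bf$ by a regular sequence $\Bg$ with $(\Bg)=(\Bf)$ and $(\Bg^*)=(\Bf^*)$, chosen so that $M$ has finite projective dimension over $P=Q/(g_2,\ldots,g_c)$; since $\ord_P(g_1)=\ord_Q(f_1)$, a matrix factorization over $P$ gives the bound exactly as in your $c=1$ argument. For $\cx_A M\geq 2$ it picks a filter--regular Eisenbud operator $\xi$ so that $\xi\colon\F_{n+2}\to\F_n$ is split surjective for $n\gg 0$, and in adapted bases
\[
[\partial_{n+2}]=\begin{pmatrix}[\delta_n]&U_n\\ 0&[\partial_n]\end{pmatrix},
\]
where $(\delta_n)$ minimally resolves a module of complexity $\cx_A M-1$. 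The inductive bound $\ord_A(\delta_n)\leq \ord_Q(f_1)-1$ then gives $\ord_A(\partial_{n+2})\leq\ord_A(\delta_n)\leq\ord_Q(f_1)-1$ directly from the block form, without ever needing to extract $\partial_i$ from the $T_{i,j}$.
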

We complement Theorem \ref{main} by showing:
\begin{example}\label{ex}(with hypotheses as in \ref{setup}). There exists a finitely generated $A$-module $K$ such that if $(\F, \partial)$ is an minimal free resolution of $K$ as an $A$-module, then for all $i \geq 0$ we have $\ord_A(\partial_{2i+1}) = \ord_Q(f_1) - 1$.
\end{example}

In this paper we also give a considerably simpler proof of a result in \cite[1.2]{bds} of periodicity of ideal of minors of a resolution of a module $M$ over a complete intersection (not necessarily strict). We show
\begin{theorem}\label{minor}
Let $(A, \m)$ be a local complete intersection and let $M$ be a finitely generated $A$-module. Let $(\F, \partial)$ be a minimal free resolution of $M$. Let $I^r_i(M)$ be the ideal generated by $r \times r$ minors of $\partial_i$. Then  $I^r_i(M) = I^r_{i+2}(M)$ for all $r \geq 1$ and $i \gg 0$.
\end{theorem}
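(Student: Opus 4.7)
The plan is to leverage Eisenbud's chain-level cohomology operators together with Gulliksen's finite generation theorem for $\bigoplus_i \Ext^i_A(M,k)$. Since $r \times r$ minors are preserved under faithfully flat extensions, I may assume $A$ is complete and write $A = Q/(\cf)$, where $(Q, \n)$ is regular local and $\cf = f_1, \ldots, f_c$ is a $Q$-regular sequence in $\n^2$.

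For each $j = 1, \ldots, c$ I would realize the $j$-th Eisenbud cohomology operator as a chain map $t_j \colon \F \to \F[-2]$; that is, a family of $A$-linear maps $t_j^{(i)} \colon F_i \to F_{i-2}$ satisfying $\partial_{i-2}\, t_j^{(i)} = t_j^{(i-1)}\, \partial_i$. By Gulliksen's theorem, $\bigoplus_i \Ext^i_A(M, k)$ is a finitely generated graded $k[t_1, \ldots, t_c]$-module (with $\deg t_j = 2$), so for $i \gg 0$ the combined multiplication $\bigoplus_{j=1}^c \Ext^{i-2}_A(M,k) \to \Ext^i_A(M,k)$ is surjective. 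Dualizing over $k$---using the identification $\Ext^i_A(M,k)^* \cong F_i/\m F_i$ coming from the minimality of $\F$---this translates into injectivity modulo $\m$ of the combined chain-level operator $T^{(i)} \colon F_i \to F_{i-2}^{\oplus c}$ with components $t_1^{(i)}, \ldots, t_c^{(i)}$. Since $F_i$ and $F_{i-2}^{\oplus c}$ are free $A$-modules, Nakayama's lemma upgrades this to split injectivity of $T^{(i)}$ over $A$.

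Fix a retraction $\pi^{(i-1)}$ of $T^{(i-1)}$ and apply it to the chain-map identity $T^{(i-1)} \partial_i = (\partial_{i-2})^{\oplus c} T^{(i)}$. This yields $\partial_i = \pi^{(i-1)} \circ (\partial_{i-2})^{\oplus c} \circ T^{(i)}$ for all $i \gg 0$. By Cauchy--Binet, $I^r_i(M) \subseteq I^r\bigl((\partial_{i-2})^{\oplus c}\bigr)$; expanding the minors of a block-diagonal matrix, this gives
\[
I^r_i(M) \;\subseteq\; \sum_{r_1 + \cdots + r_c = r}\; I^{r_1}_{i-2}(M) \cdot I^{r_2}_{i-2}(M) \cdots I^{r_c}_{i-2}(M) \qquad (i \gg 0),
\]
which for $r = 1$ collapses to the clean inclusion $I^1_i(M) \subseteq I^1_{i-2}(M)$.

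The main obstacle is upgrading these one-sided inclusions to the desired equality $I^r_i(M) = I^r_{i+2}(M)$. A descending chain of ideals in a Noetherian ring need not stabilize, so I would supply the reverse inclusion $I^r_{i-2}(M) \subseteq I^r_i(M)$ independently, using the complete (Tate) resolution $\widehat\F$ of $M$ over the Gorenstein ring $A$: on $\widehat\F$ the operators $t_j$ become invertible up to homotopy in high degrees, producing a chain-level factorization in the reverse direction $\partial_{i-2} = \sigma \circ (\partial_i)^{\oplus c} \circ S$ for suitable $\sigma, S$. For $r \geq 2$ one must additionally absorb the cross-terms $\prod_l I^{r_l}_{i-2}(M)$ into the single ideal $I^r_{i-2}(M)$; I would handle this by induction on $r$, invoking the already-established periodicity of $I^s_i$ for $s < r$ to control lower-order contributions. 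Making this bootstrap precise---and doing so without the ascending-chain machinery used in \cite{bds}---is the delicate heart of the simpler argument.
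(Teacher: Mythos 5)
Your chain-level factorization $\partial_i = \pi^{(i-1)} \circ (\partial_{i-2})^{\oplus c} \circ T^{(i)}$ is correctly derived, and the Cauchy--Binet step is fine, but it only produces the \emph{descending} inclusion (for $r=1$, $I^1_i(M) \subseteq I^1_{i-2}(M)$). As you yourself observe, a descending chain of ideals in a Noetherian ring need not stabilize, so the whole burden shifts to the reverse inclusion $I^r_{i-2}(M) \subseteq I^r_i(M)$ --- and the Tate-resolution argument you sketch for it does not work. The individual Eisenbud operators $t_j$ do not become homotopy-invertible in high degrees unless $\cx_A M \leq 1$: if each $t_j$ were an isomorphism on $F_i$ for $i \gg 0$, the Betti numbers would eventually stabilize, contradicting $\cx_A M \geq 2$. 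So there is no factorization of $\partial_{i-2}$ through $(\partial_i)^{\oplus c}$ analogous to yours. The cross-term issue for $r \geq 2$ is also genuine, since $I^{r_1}(\phi)\cdot I^{r_2}(\phi)$ is not in general contained in $I^{r_1+r_2}(\phi)$, and the inductive bootstrap you gesture at does not obviously absorb those products into $I^r_{i-2}(M)$.

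The paper avoids all of this by producing the \emph{ascending} inclusion directly. Rather than using all $c$ operators to split-inject $F_i$ into $F_{i-2}^{\oplus c}$, it chooses a single degree-two element $\xi \in k[t_1,\ldots,t_c]$ that is filter-regular on $\Ext^*_A(M,k)$, so that $\xi$ induces surjections $F_{n+2} \to F_n$ for $n \gg 0$. The kernel complex $\G$ is then a minimal free resolution of a syzygy of a module of complexity $\cx_A M - 1$, and one can pick bases so that
\[
[\partial_{n+2}] =
\begin{pmatrix}
[\delta_n] & U_n \\ 0 & [\partial_n]
\end{pmatrix}.
\]
Every $r \times r$ minor of $\partial_n$ appears as an $r \times r$ minor of $\partial_{n+2}$, so $I^r_n(M) \subseteq I^r_{n+2}(M)$, and Noetherianity finishes the proof. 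The direction of the chain, the cross-terms, and the need for a reverse inclusion all disappear at once. If you want to rescue your approach you will in any case need the ascending inclusion by some independent device, and the surjective section plus block-triangularity is essentially the mechanism that supplies it.
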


Here is an overview of the contents of this paper. In section two we discuss a few preliminaries that we need. In section three we discuss the notion of Eisenbud operators in a complete intersection and also some preliminaries on matrix factorizations. In section four we prove Theorem \ref{main} in the case when $M$ is MCM $A$-module with complexity one.
In the next section we discuss a well-known technique to reduce complexity of a module over complete intersections. In section six we prove Theorem \ref{main}. In the next section we give our example \ref{ex}. Finally in section eight we give a proof of Theorem \ref{minor}.

\section{Preliminaries}
In this paper all rings are Noetherian and all modules considered are assumed to be finitely generated.
In this section we discuss a few preliminary results that we need.
Let $(A,\m)$ be
 a local ring of dimension $d$ with residue field $k = A/\m$. Let $M$ be
 an
$A$-module. If $m$ is a non-zero
 element of $M$ and if $j$ is the largest integer such that $m \in \m^{j}M$,
then we let $m^*$ denote the image of $m$ in $\m^jM/\m^{j+1}M$.  The element $m^*$ of $G(M)$ is called the \emph{initial form} of $m$.
Also $\ell(N)$ denotes the length of an $A$-module $N$.
\s Let $H_M(z) = \sum_{n \geq 0} \ell_A(\m^nM/\m^{n+1}M)z^n$ be the Hilbert series of $M$. Then $H_M(z) = h_M(z)/(1-z)^{\dim M}$ where $h_M(z) \in \Z[z]$ and $h_M(1) \neq 0$. The polynomial $h_M(z)$ is called the \emph{$h$-polynomial} of $M$.

\s \textbf{Base change:}
\label{AtoA'}
 Let $\phi \colon (A,\m) \rt (A',\m')$ be a local ring homomorphism. Assume
   $A'$ is a faithfully flat $A$
algebra with $\m A' = \m'$. Set $\m' = \m A'$ and if
 $N$ is an $A$-module set $N' = N\otimes_A A'$.
 In these cases it can be seen that

\begin{enumerate}[\rm (1)]
\item
$\ell_A(N) = \ell_{A'}(N')$.
\item
 $H(M,n) = H(M',n)$ for all $n \geq 0$.
\item
$\dim M = \dim M'$ and  $\depth_A M = \depth_{A'} M'$.
\item
$h_M(z) = h_{M'}(z)$.
\item
$\projdim_A M = \projdim_{A'} M'$.
\item
$A'$ is a (strict) local complete intersection if and only if $A$ is a (strict) local complete intersection.
\item
Let $(\F.\partial)$ be a minimal resolution of $M$. Then $(\F', \partial') = (\F\otimes A', \partial \otimes A')$ is a minimal resolution of $M'$.
\item
Let $G, H$ be free $A$-modules and let $\phi \colon G \rt H$ be $A$-linear. Consider $\phi' \colon G' \rt F'$. Then
for every $r \geq 1$ we have $I^r(\phi') = I^r(\phi)A'$. Furthermore $\ord_A(\phi) = \ord_{A'}\phi'$.
\item
If $I, J$ are ideals in $A$. If $I' = J'$ then $I = J$.
\end{enumerate}

 \noindent The specific base changes we do are the following:

(i) $A' = A[X]_S$ where $S =  A[X]\setminus \m A[X]$.
The maximal ideal of $A'$ is $\n = \m A'$.
The residue
field of $A'$ is $K = k(X)$.

(ii) $A' = \widehat{A}$ the completion of $A$ with respect to the maximal ideal.

Thus we can assume that our ring $A$ is complete with infinite residue field.

\s Let $\beta_n(M) = \ell(\Tor^A_n(M, k))$ be the $n^{th}$-betti number of $M$. The complexity of an $A$-module is defined as
\[
\cx_A M = \inf \{ m \geq 0 \mid \limsup_{n \rt \infty} \beta_n(M)/n^{m-1} < \infty \}.
\]

\s \label{ord} Let $(Q,\n)$ be a regular local ring and let $f_1, \ldots, f_c \in \n$ be a regular sequence. Set $\ord_Q f_i = s_i$. Assume that $f_1^*, \ldots, f_c^*$ is a $G(Q)$-regular sequence. Set $A = Q/(f_1, \ldots, f_c)$ and $B = Q/(f_1, \ldots, f_{c -1})$.
Then $\ord_B f_c = s_c$.

To see this note that
$$G(B) = G(Q)/(f_1^*, \cdots, f_{c-1}^*) \quad \text{and} \quad G(A) = G(Q)/(f_1^*, \cdots, f_{c}^*) = G(B)/(f_c^*). $$
We note that the $h$-polynomial of $A$ and $B$ are as follows
$$h_A(z) = \prod_{i = 1}^{c}(1+z+ \cdots + z^{s_i -1}) \quad \text{and} \quad h_B(z) = \prod_{i = 1}^{c-1}(1+z+ \cdots + z^{s_i -1}).$$
Let $r = \ord_B f_c$. As $f_c^*$ is $G(B)$-regular and  $G(A) = G(B)/(f_c^*)$ it follows that
$$h_A(z) = h_B(z)(1+z+ \cdots + z^{r -1}).$$
Comparing the two expressions of $h_A(z)$ we get $r = s_c$.

\s \label{ini} Let $A = B/(f)$ . Let $b \in B$ with $\ord_B(b) \leq \ord_B(f) - 1$. Let $\ov{b}$ be the image of $b$ in $A$.
 Then $\ord_A(\ov{b}) \leq \ord_B(f) - 1$. To see this
 if  $\ord_A(\ov{b}) \geq \ord_B(f) $ then $b = b_1 + rf$ where $\ord_B(b_1) \geq \ord_B(f)$. It follows that $\ord_B(b) \geq \ord_B(f)$ which is a contradiction.
\section{Eisenbud operators and matrix factorization's}
To prove Theorem \ref{main} and \ref{minor}  we need the notion of cohomological operators over a complete intersection ring; see \cite{Gull} and
\cite{Eis}. We also need the theory of matrix factorizations.
 Let $\mathbf{f} = f_1,\ldots,f_c$ be a regular sequence in a  local Noetherian ring $Q$. Set $I = (\mathbf{f})$ and
 $ A = Q/I$,
\s
The \emph{Eisenbud operators}, \cite{Eis}  are constructed as follows: \\
Let $\mathbb{F} \colon \cdots \rightarrow F_{i+2} \xrightarrow{\partial} F_{i+1} \xrightarrow{\partial} F_i \rightarrow \cdots$ be a complex of free
$A$-modules.

\emph{Step 1:} Choose a sequence of free $Q$-modules $\wt{F}_i$ and maps $\wt{\partial}$ between them:
\[
\wt{\mathbb{F}} \colon \cdots \rightarrow \wt{F}_{i+2} \xrightarrow{\wt{\partial}} \wt{F}_{i+1} \xrightarrow{\wt{\partial}} \wt{F}_i \rightarrow \cdots
\]
so that $\mathbb{F} = A\otimes\wt{\mathbb{F}}$

\emph{Step 2:} Since $\wt{\partial}^2 \equiv 0 \ \text{modulo} \ (\mathbf{f})$, we may write  $\wt{\partial}^2  = \sum_{j= 1}^{c} f_j\wt{t}_j$ where
$\wt{t_j} \colon \wt{F}_i \rightarrow \wt{F}_{i-2}$ are linear maps for every $i$.

 \emph{Step 3:}
Define, for $j = 1,\ldots,c$ the map $t_j = t_j(Q, \mathbf{f},\mathbb{F}) \colon \mathbb{F} \rightarrow \mathbb{F}(-2)$ by $t_j = A\otimes\wt{t}_j$.

\s
The operators $t_1,\ldots,t_c$ are called Eisenbud's operator's (associated to $\mathbf{f}$) .  It can be shown that
\begin{enumerate}
\item
$t_i$ are uniquely determined up to homotopy.
\item
$t_i, t_j$ commute up to homotopy.
\end{enumerate}

\s \label{Basis-change} Let $\mathbf{g} = g_1,\ldots, g_c$ be another regular sequence in $Q$ and assume $I = (\Bf) = (\Bg)$. The Eisenbud operator's associated to
$\Bg$ can be constructed by using the corresponding operators associated to $\Bf$. An explicit construction is as follows:
Let
$$f_i = \alpha_{i1}g_1 + \cdots + \alpha_{ic} g_c \quad \text{for} \ i = 1, \ldots, c. $$
Then we can choose
\begin{equation}\label{bchange-eqn}
t_i^\prime = \alpha_{1i}t_1 + \cdots + \alpha_{ci} t_c \quad \text{for} \ i = 1, \ldots, c.
\end{equation}
as Eisenbud operators for $g_1,\ldots,g_c$. Note that in \cite{Eis} there is a mistake in indexing.

In the above setup
it is convenient to use matrices.
Here $\alpha = (\alpha_{ij})$ is a $c\times c$ invertible matrix with coefficients in $Q$. If $ \phi = (\phi_{ij})$ be a $m \times n$ matrix with coefficients in $Q$ then we set $\phi^{tr} = (\phi_{ji})$ to be the transpose of $\phi$. Set $[\Bf]$ and $[\Bg]$ to be the column vectors
$(f_1,\ldots,f_c)^{tr}$ and $(g_1,\ldots,g_c)^{tr}$ respectively. Thus in matrix terms we have
\[
[\Bf ] = \alpha\cdot [\Bg].
\]
Let $t_1, \ldots, t_c$ be the operators associated to $\Bf$. Set $[\mathbf{t}] = (t_1,\ldots,t_c)^{tr}$.`
Let $t_1^\prime,\cdots,t_c^\prime$ be our chosen Eisenbud operators associated to $\Bg$.
Then we have
\[
[\mathbf{t^\prime}] = \alpha^{tr}\cdot [\mathbf{t}].
\]

\s Let $R = A[t_1,\ldots,t_c]$ be a polynomial ring over $A$ with variables $t_1,\ldots,t_c$ of degree $2$. Let $M, N$ be  finitely generated $A$-modules. By considering a free resolution $\mathbb{F}$ of $M$ we get well defined maps
\[
t_j \colon \Ext^{n}_{A}(M,N) \rightarrow \Ext^{n+2}_{R}(M,N) \quad \ \text{for} \ 1 \leq j \leq c  \ \text{and all} \  n,
\]
which turn $\Ext_A^*(M,N) = \bigoplus_{i \geq 0} \Ext^i_A(M,N)$ into a module over $R$. Furthermore these structure depend  on $\Bf$, are natural in both module arguments and commute with the connecting maps induced by short exact sequences.

\s\label{avr}  Gulliksen, \cite[3.1]{Gull},  proved that if $\projdim_Q M$ is finite then
$\Ext_A^*(M,N) $ is a finitely generated $R$-module. For $N = k$, the residue field of $A$, Avramov in \cite[3.10]{LLAV} proved a converse; i.e., if
$\Ext_A^*(M,k)$ is a finitely generated $R$-module then $\projdim_Q M$ is finite. For a more general result, see \cite[4.2]{AGP}.

\s\label{r} Since $\m \subseteq \ann \Ext^{i}_A(M,k)$ for all $i \geq 0$ we get that $\Ext^*_A(M,k)$ is a module over $S = R/\m R = k[t_1,\ldots,t_c]$.
If $\projdim_Q M$ is finite then $\Ext^*_A(M,k)$ is a finitely generated $S$-module of Krull dimension $\cx M$.

\s \label{inverse}
Going mod $\m$ we get that the ring $S$ is invariant of a minimal generating set of $I = (\Bf)$.
Conversely if $\xi_1,\ldots,\xi_c \in S_2$ be such that $S = k[\xi_1,\ldots,\xi_c]$ then there exist's  a regular sequence
$\Bg = g_1,\ldots,g_c$ such that
\begin{enumerate}
\item
$(\Bg) = (\Bf)$
\item
if $t_j^\prime$ are the Eisenbud operators associated to $g_j$ for $j = 1,\ldots,c$ then the action of $t_j^\prime$ on $\Ext^*_A(M,k)$ is same as that
of $\xi_j$ for $j = 1,\ldots,c$.
\end{enumerate}
This can be seen as follows.
Let
\[
\xi_i = \ov{\beta_{i1}}t_1 + \cdots + \ov{\beta_{ic}}t_c \quad \text{for} \ i = 1,\ldots,c.
\]
Let $\beta = (\beta_{ij}) \in M_c(A)$. Note that $\beta$ is an invertible matrix since $\ov{\beta} = (\ov{\beta_{ij}})$ is an invertible matrix in $M_n(k)$. Set $\alpha = \beta^{tr}$. Define $\Bg = g_1,\ldots,g_c$ by
\[
[\Bg] = \alpha^{-1}\cdot[\Bf]
\]
Clearly $\Bg = g_1,\ldots,g_c$ is a regular sequence and $(\Bf) = (\Bg)$.
Notice $[\Bf] = \alpha \cdot [\Bg]$. So by \ref{Basis-change} the cohomological operators $t_1^{\prime},\ldots, t_c^{\prime}$ associated to $\Bg$
is given by the formula
\[
[\mathbf{t^\prime}] = \alpha^{tr}\cdot [\mathbf{t}] = \beta \cdot [\mathbf{t}].
\]
It follows that the action of $t_j^\prime$ on $\Ext^*_A(M,k)$ is same as that
of $\xi_j$ for $j = 1,\ldots,c$.

\s\label{mf}(\cite[5.1, 5.2]{Eis}) A \emph{matrix factorization} of an element $f$ in a ring $P$ is an ordered pair of maps of free $P$ -modules ($\phi \colon F\rt G$ and $\psi \colon G \rt F$) such that
 $\psi\phi = f1_F$ and $\phi\psi = f1_G$. Note that if $(\phi, \psi)$ is a matrix factorization of $f$, then $f$ annihilates $\coker \phi$.  Set $M = \coker \phi$.
If $f$ is $P$-regular then we have the following periodic   resolution of $M$ over $A = P/(f)$,
$$  \cdots \rt \ov{G} \xrightarrow{\ov{\phi}} \ov{H} \xrightarrow{\ov{\psi}} \ov{G}\rt \cdots \rt\ov{G} \xrightarrow{\ov{\phi}} \ov{H} \rt M \rt 0. $$
We note that this is a \textit{minimal} resolution of $M$ if $M$ has \textit{ no} free summands as an $A$-module.

Conversely if $f$ is a non-zero divisor of $P$ and $M$ is an $A = P/(f)$ module with $\projdim_P M = 1$ then there exists a matrix factorization $(\phi, \psi)$ of $f$ such that $\coker \phi = M$, see \cite[5.1.2]{Ainf}.

\section{The case when $M$ has complexity one}
In this section we give a proof of Theorem \ref{main} when $M$ is a maximal \CM \ $A$-module of complexity one over a strict complete intersection. Throughout the set-up will
be as in \ref{setup}.
\begin{theorem}
\label{cx1}Let $M$ be a MCM $A$-module of complexity one. Let $(\F, \partial)$ be a minimal resolution of $M$. Then $\ord_A(\partial_i) \leq \ord_Q(f_1) - 1$.
\end{theorem}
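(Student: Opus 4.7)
Since $\cx_A M = 1$, by \ref{r} the graded module $\Ext_A^*(M,k)$ is a finitely generated $S = k[t_1,\ldots,t_c]$-module of Krull dimension one; after the base change of \ref{AtoA'}(i) we may assume the residue field $k$ is infinite. The finitely many minimal primes of $\operatorname{ann}_S \Ext^*_A(M,k)$ other than $\mathfrak{m}_S$ each cut out a proper $k$-subspace of $S_2$, so we can pick a linear form $\xi_1 \in S_2$ lying in none of them; then $\Ext_A^*(M,k)$ is finite as a $k[\xi_1]$-module. Completing to a basis $\xi_1,\ldots,\xi_c$ of $S_2$ and applying \ref{inverse}, we obtain a regular sequence $g_1,\ldots,g_c$ generating $(\mathbf{f})$ whose Eisenbud operators $t_i'$ act on $\Ext^*_A(M,k)$ as $\xi_i$.

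Set $B := Q/(g_2,\ldots,g_c)$, a local Cohen-Macaulay ring with $A = B/(g_1)$. The cohomology operator attached to the single-element extension $g_1$ of $B$ to $A$ coincides with $t_1'$ (by comparing the two-step and one-step liftings of a resolution), so $\Ext_A^*(M,k)$ is finite over $k[t_1']$. The converse of Gulliksen's theorem (\ref{avr}, applied to the hypersurface pair $(B,A)$) therefore yields $\projdim_B M < \infty$, and since $\depth_B M = \depth_A M = \dim A = \dim B - 1$ by the MCM hypothesis, Auslander-Buchsbaum forces $\projdim_B M = 1$. By \ref{mf}, after stripping any free $A$-summands of $M$ (which only contribute zero blocks to the $\partial_i$), $M$ is the cokernel of a matrix factorization $(\phi,\psi)$ of $g_1$ over $B$, and its minimal $A$-resolution is $2$-periodic with differentials $\bar\phi,\bar\psi$. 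Minimality gives $\ord_B(\phi),\ord_B(\psi) \geq 1$, while $\phi\psi = g_1\cdot 1_F$ forces $\ord_B(\phi)+\ord_B(\psi) \leq \ord_B(g_1)$, so
\[
\max\bigl(\ord_B\phi,\;\ord_B\psi\bigr) \;\leq\; \ord_B(g_1)-1.
\]

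The main obstacle is the bound $\ord_B(g_1) \leq s_1 := \ord_Q(f_1)$. Suppose to the contrary that $g_1 \in \mathfrak{n}^{s_1+1} + (g_2,\ldots,g_c)$; then $h := g_1 - \sum_{i\geq 2} r_i g_i \in (\mathbf{f}) \cap \mathfrak{n}^{s_1+1}$ for some $r_i \in Q$. The strict-CI hypothesis yields the Artin-Rees type identity
\[
(\mathbf{f}) \cap \mathfrak{n}^n \;=\; \sum_{j=1}^c \mathfrak{n}^{\,n-s_j}\, f_j
\]
(all syzygies of $\mathbf{f}$ lift from Koszul syzygies of $\mathbf{f}^*$, because $f_1^*,\ldots,f_c^*$ is $G(Q)$-regular). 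Since $s_1+1-s_j \geq 1$ for every $j$, we obtain $(\mathbf{f}) \cap \mathfrak{n}^{s_1+1} \subseteq \mathfrak{n}(\mathbf{f})$, hence $h \in \mathfrak{n}(\mathbf{f})$. Reducing modulo $\mathfrak{n}(\mathbf{f})$ produces a $k$-linear relation $\bar g_1 = \sum_{i\geq 2} \bar r_i \bar g_i$ inside $(\mathbf{f})/\mathfrak{n}(\mathbf{f})$, contradicting the minimality of the generating set $g_1,\ldots,g_c$ of $(\mathbf{f})$.

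To finish, choose an entry $\phi_{i_0 j_0}$ of $\phi$ realising $\ord_B(\phi_{i_0 j_0}) = \ord_B(\phi) \leq \ord_B(g_1)-1$; by \ref{ini} applied to $A = B/(g_1)$, $\ord_A(\overline{\phi_{i_0 j_0}}) \leq \ord_B(g_1)-1 \leq s_1-1$, whence $\ord_A(\bar\phi) \leq s_1-1$. The same argument bounds $\ord_A(\bar\psi)$, and since every differential $\partial_i$ is one of $\bar\phi,\bar\psi$, we conclude $\ord_A(\partial_i) \leq \ord_Q(f_1) - 1$ for all $i$.
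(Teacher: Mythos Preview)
Your proof is correct, and the overall architecture (pass to a hypersurface presentation $A=B/(g_1)$ with $\projdim_B M=1$, then bound the orders of the matrix-factorization maps) matches the paper. The genuine divergence is in how you control $\ord_B(g_1)$.

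The paper does not complete $\xi_1$ to an arbitrary basis of $S_2$. Instead it chooses $\xi_1$ with \emph{all} coefficients nonzero and sets $\xi_j=t_j$ for $j\geq 2$. With this specific choice the transformation matrix is triangular, and one computes explicitly $g_1=\beta_1^{-1}f_1$ and $g_j=-\beta_j\beta_1^{-1}f_1+f_j$. A short check shows $g_1^*,\ldots,g_c^*$ is again a $G(Q)$-regular sequence generating $(\Bf^*)$; then \ref{ord} (applied to the reordered sequence $g_2,\ldots,g_c,g_1$) yields the \emph{equality} $\ord_P(g_1)=\ord_Q(g_1)=\ord_Q(f_1)$, and \ref{ini} finishes the argument exactly as you do.

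Your route avoids this careful basis choice and instead appeals to the Valabrega--Valla type identity $(\Bf)\cap\n^{s_1+1}\subseteq \n(\Bf)$, which indeed follows from the regularity of $\Bf^*$ (the one-line justification you give is a bit compressed; one still needs an Artin--Rees step to terminate the iteration, but that is routine). This buys flexibility: you never need $B$ itself to be a strict complete intersection, and you never need $g_1$ to be a unit multiple of $f_1$. On the other hand you only obtain the inequality $\ord_B(g_1)\leq \ord_Q(f_1)$ rather than equality, and the intermediate ring $B$ has no controlled associated graded structure. For the present theorem the inequality suffices, so nothing is lost; the paper's choice is more explicit and preserves the strict-CI hypothesis along the way, which is what makes \ref{ord} directly applicable.
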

\begin{proof}
We may assume that $M$ has no free summands. We also assume that the residue field $k$ of $A$ is infinite.
 We note that $E = \Ext^*_A(M,k)$ is a finitely generated graded $S = k[t_1,\ldots,t_c]$ module of Krull dimension one, see \ref{r}. It is easy to see that
there exists $\xi_1 = \ov{\beta_1}t_1 + \cdots + \ov{\beta_c}t_c \in S_2$ such that
\begin{enumerate}
\item
$\xi_1$ is a parameter for $E$.
\item
$\ov{\beta_1}, \ov{\beta_2}, \ldots, \ov{\beta_c}$ are all non-zero.
\end{enumerate}
Set $\xi_j = t_j$ for $j = 2,\ldots,c$. Then $S = k[\xi_1, \ldots, \xi_c]$.

Set $\beta = (\beta_{ij})$ where
\[
\beta_{ij}  =
\begin{cases}
\beta_j, &\text{if $i = 1$;} \\
1, &\text{if $i = j$ and $i \geq 2$;} \\
0, &\text{otherwise.}
\end{cases}
\]
Clearly $\beta$ is an invertible matrix in $M_n(A)$. By \ref{inverse} there exists a regular sequence $\Bg = g_1, \ldots,g_c$ such that
\begin{enumerate}
\item
$(\Bg) = (\Bf)$
\item
if $t_j^\prime$ are the Eisenbud operators associated to $g_j$ for $j = 1,\ldots,c$ then the action of $t_j^\prime$ on $\Ext^*_A(M,k)$ is same as that
of $\xi_j$ for $j = 1,\ldots,c$.
\end{enumerate}
By \ref{inverse}
\[
[\Bg] = (\beta^{tr})^{-1}\cdot [\Bf] = (\beta^{-1})^{tr} \cdot [\Bf].
\]
It is easy to compute the inverse of $\beta$. So we obtain
\begin{align*}
g_1 &= \frac{1}{\beta_1}f_1 \\
g_j &= \frac{-\beta_j}{\beta_1} f_1 + f_j \ \text{for} \ j = 2,\ldots,c.
\end{align*}
Recall that $\ord(f_1) \geq \ord(f_j)$ for $j = 2, \ldots,c$. Notice that if $\ord(f_1) = \ord(f_j)$ then
\[
\frac{-\beta_j}{\beta_1} f_1^* + f_j^* \neq 0,
\]
since $f_1^*, \ldots, f_c^*$ is a $G(Q)$-regular sequence. Thus we have
\[
g_1^* = \frac{1}{\beta_1}f_1^*  \quad \text{and}
\]
for $2 \leq j \leq c$ we have
\[
g_j^* =
\begin{cases}
f_j^*, &\text{if $\ord(f_1) > \ord(f_j)$;} \\
\frac{-\beta_j}{\beta_1} f_1^* + f_j^*, &\text{if $\ord(f_1) = \ord (f_j)$.}
\end{cases}
\]
Notice that $\Bg^* = g_1^*,\ldots,g_c^*$ is a regular sequence in $G(Q)$ and $(\Bg^*) = (\Bf^*)$. The subring
$k[\xi_1]$ of $S$ can be identified with the ring $S^\prime$ of cohomological operators of a presentation $A = P/(g_1)$ where
$P = Q/(g_2,\ldots,g_c)$. Since $\Ext_A^*(M,k)$ is a finite module over $S^\prime$, by \ref{avr} we get that $\projdim_P M$ is finite.

Notice that
\begin{enumerate}
\item
$\dim P = \dim A + 1 \geq 2$.
\item
$\projdim_P M = 1$, (as $M$ is an MCM $A$-module).
\end{enumerate}
By \ref{mf} there is a matrix-factorization $(\phi, \psi)$ of $f_1$ over $P$ such that
$0 \rt G \xrightarrow{\phi} H \rt M \rt 0$ is a minimal free resolution of $M$ over $P$ and $\psi\phi = f_11_G $ and $\phi\psi = f_1 1_H$.
As $M$ has no free summands as an $A$-module it follows that $\psi$ is also a minimal map.
Furthermore we have the following  periodic minimal resolution of $M$ over $A$,
$$  \cdots \rt \ov{G} \xrightarrow{\ov{\phi}} \ov{H} \xrightarrow{\ov{\psi}} \ov{G}\rt \cdots \rt\ov{G} \xrightarrow{\ov{\phi}} \ov{H} \rt M \rt 0. $$
We note that $\ord_P(\phi), \ord_P(\psi) \leq \ord_P(f_1) - 1$. By \ref{ord},  $\ord_P(f_1) = \ord_Q(f_1)$. We also note that if $a \in P$ has order $\leq \ord_P(f_1) -1$ then its order in $A$ is also $\leq \ord_P(f_1) -1$, see \ref{ini}. It follows that
$\ord_A(\ov{\phi}), \ord_A(\ov{\psi}) \leq \ord_P(f_1) - 1$. The result follows.
\end{proof}
\section{Section of a module}
In this section we describe a well-known construction to reduce the complexity of a module over a complete intersection.
\begin{theorem}
\label{section}
Let $(Q,\n)$ be a local ring with infinite residue field and let $f_1, \ldots, f_c$ be a $Q$-regular sequence. Set $A= Q/(\cf)$. Let $M$ be an $A$-module with $\projdim_Q M$ finite. Assume $\cx_A M \geq 2$. Let $(\F, \partial)$ be a minimal resolution of $M$ as an $A$-module and let $t_1,\ldots,t_c \colon \F(+2) \rt \F $ be Eisenbud operators corresponding to $\Bf$ and $\F$. Set $M_n = \Syz^A_n(M)$. Set  $R = A[t_1,\ldots, t_c]$ (with $\deg t_i =2$ for all $i$). Then there
exists $\xi \in R_2$ such that
\begin{enumerate}[\rm (1)]
\item There exists $n_0$ such that for all $n\geq n_0$ the map of complexes $\xi \colon \F(+2) \rt \F$ induces surjections $\F_{n+2} \rt \F_n$. Let $G_n = \ker(\F_{n+2} \rt \F_{n})$.
\item Note $\xi$ induces surjections $M_{n+2} \rt M_n$ for $n \geq n_0$. Let $L_n = $ kernel of this map. Then $\cx_A L_{n_0} = \cx M -1$.
\item $\G[n_0] \rt L_{n_0}$ is a minimal resolution of $L_{n_0}$. Furthermore $L_n = \Syz^A_{n -n_0}(L_{n_0})$ for all $n \geq n_0$.
\item We have a commutative diagram for all $n \geq n_0 + 1$,
\[
  \xymatrix
{
 0
 \ar@{->}[r]
  & \G_n
\ar@{->}[r]
\ar@{->}[d]^{\delta_n}
 & \F_{n+2}
\ar@{->}[r]
\ar@{->}[d]^{\partial_{n+2}}
& \F_n
\ar@{->}[r]
\ar@{->}[d]^{\partial_{n} }
&0
\\
 0
 \ar@{->}[r]
  & \G_{n-1}
\ar@{->}[r]
 & \F_{n+1}
\ar@{->}[r]
& \F_{n-1}
    \ar@{->}[r]
    &0
\
 }
\]
\end{enumerate}
\item
Fix $n \geq n_0 +1$. There exists basis of $\F_m, \F_{m+2}$ and $\G_m$ (for $m = n, n-1$) such that  the matrix of $\partial_{n+2}$ is
\[
[\partial_{n+2}] =
\begin{pmatrix}
[\delta_n] & U_n \\ 0 & [\partial_n]
\end{pmatrix}.
\]
\end{theorem}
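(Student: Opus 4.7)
The plan is to translate the commutative diagram of part (4) into a matrix identity using splittings of its rows. Each row is a short exact sequence of free $A$-modules, hence splits, yielding decompositions $\F_{n+2} \cong \G_n \oplus \F_n$ and $\F_{n+1} \cong \G_{n-1} \oplus \F_{n-1}$. I would fix such splittings and choose bases of $\F_{n+2}$ and $\F_{n+1}$ obtained by concatenating the given bases of $\G_n, \G_{n-1}$ with lifts of chosen bases of $\F_n, \F_{n-1}$. In these bases the horizontal maps of the diagram become the canonical projections onto the $\F_n, \F_{n-1}$ summands, and the inclusions $\G_n \hookrightarrow \F_{n+2}$, $\G_{n-1}\hookrightarrow \F_{n+1}$ become the canonical inclusions of the first summands.

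With respect to these decompositions, write $[\partial_{n+2}]$ as a $2\times 2$ block matrix $\begin{pmatrix} A & B \\ C & D \end{pmatrix}$, where $A$ acts from $\G_n$ to $\G_{n-1}$, $B$ from $\F_n$ to $\G_{n-1}$, and so on. Commutativity of the right square of the diagram says that (projection to $\F_{n-1}$) $\circ$ $\partial_{n+2}$ equals $\partial_n$ $\circ$ (projection to $\F_n$); evaluating on the $\G_n$ and $\F_n$ summands separately forces $C = 0$ and $D = [\partial_n]$. Commutativity of the left square says that $\partial_{n+2}$ restricted to the $\G_n$ summand equals $\delta_n$ followed by inclusion into the $\G_{n-1}$ summand of $\F_{n+1}$; this yields $A = [\delta_n]$ and reconfirms $C = 0$. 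The remaining block $B$ is unconstrained, and is what we denote by $U_n$.

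The argument is essentially bookkeeping, relying only on the splitting of short exact sequences of free modules; there is no genuine obstacle. The only subtlety worth being careful about is choosing the splittings on $\F_{n+2}$ and $\F_{n+1}$ compatibly, so that the horizontal arrows simultaneously acquire their canonical form as projections (on the right) and inclusions (on the left). Once this normalization is in place, the block-upper-triangular shape is forced by the two commutativity relations read off summand-wise.
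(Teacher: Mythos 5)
Your proposal addresses only the last assertion, part (5): reading off the block-upper-triangular form of $[\partial_{n+2}]$ from the commutative diagram of part (4). That argument is correct, and it is essentially what the paper does — the paper also picks bases $C_m$ of $\G_m$ and $V_m$ of $\F_m$, completes $C_m$ to a basis of $\F_{m+2}$ by lifting $V_m$ along $\xi$, and then the block shape with $[\delta_n]$ and $[\partial_n]$ on the diagonal is immediate from the two commuting squares.

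The gap is that parts (1)--(4) carry essentially all of the content of the theorem and are simply taken as given. None of the following is addressed: the \emph{existence} of $\xi \in R_2$ inducing eventually-surjective maps $\F_{n+2}\to\F_n$ (this uses that $E(M)=\bigoplus_n\Ext^n_A(M,k)$ is a finitely generated $S=k[t_1,\ldots,t_c]$-module — which is where the hypothesis $\projdim_Q M<\infty$ enters, via Gulliksen — together with a filter-regular degree-$2$ element $t\in S$, available because $k$ is infinite, and then Nakayama to lift $\Tor$-level surjectivity of $\xi$ to surjectivity of the chain maps $\F_{n+2}\to\F_n$); the complexity drop $\cx_A L_{n_0}=\cx_A M-1$ (which follows from the resulting short exact sequences $0\to\Tor^A_i(L_{n_0},k)\to\Tor^A_i(M_{n_0+2},k)\to\Tor^A_i(M_{n_0},k)\to 0$, but one has to argue these are exact on the left as well); the claim that $\G[n_0]\to L_{n_0}$ is a \emph{minimal} resolution — the genuinely delicate point, proved by showing $L_n\subseteq \m\G_{n-1}$ via $L_n\subseteq M_{n+2}\subseteq\m\F_{n+1}$ and the split inclusion $\G_{n-1}\hookrightarrow\F_{n+1}$; and the commutative diagram itself (trivial from $\xi$ being a chain map, but it still needs to be noted). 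Your block-matrix bookkeeping for (5) is fine, but without (1)--(4) there is no theorem to book-keep.
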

\begin{proof}
 (1) and (2): Set $E(M) = \bigoplus_{n \geq 0}\Ext^n_A(M, k)$. It is a finitely generated graded $R$-module where $\deg t_i = 2$ for all $i$.
As $\m E(M) = 0$ we get that $E(M)$ is a finitely generated $S = k[t_1,\ldots, t_c]$-module. As $k$ is infinite  we may choose $t$ homogeneous of degree $2 $ in $S$ such that $(0 \colon_{E(M)} t)$ has finite length (equivalently $t$ is $E(M)$-filter regular).
We have that $t$ induces injections $\Ext^{n}_A(M, k) \rt \Ext^{n+ 2}_A(M, k)$ for $n \gg 0$ say for $n \geq n_0$.
Dualizing we get surjections $\Tor^A_{n+2}(M, k) \rt \Tor^A_{n}(M, k)$ for $n \geq n_0$.
 Let $\xi$ be homogeneous of degree $2$ in $R$ such that its image in $S$ is $t$. So we have a chain map $\xi \colon \F(+2) \rt \F$. As $\Tor^A_{n}(\xi, k)$ is surjective for $n \geq n_0$, it follows from Nakayama's lemma  that we have surjections $\F_{n + 2} \rt \F_n$ for $n \geq n_0$. Note $\xi$ is a chain map. So we have exact sequences $ 0 \rt L_n \rt M_{n+ 2} \rt M_n \rt 0$ for $n \geq n_0$. Also note that we also have exact sequences  for $i \geq 0$
\[
\Tor^A_{i}(M_{n_0 + 2}, k) \rt \Tor^A_{i}(M_{n_0}, k) \rt 0.
\]
 It follows that we have exact sequence
\[
 0 \rt \Tor^A_{i}(L_{n_0}, k) \rt \Tor^A_{i}(M_{n_0 + 2}, k) \rt \Tor^A_{i}(M_{n_0}, k) \rt 0.
\]
So $\cx L_{n_0} = \cx M - 1$.

(3) As $\xi \colon \F[+2] \rt\F$ is a chain map we get that $\ker \xi$ is also a chain complex. It is easy to see that $H^*(\G[n_0]) = L_{n_0}$. So $\G[n_0] \rt L_{n_0}$ is a  resolution of $L_{n_0}$. To see it is minimal, let $n \geq n_0 + 1$; then $L_n \subseteq M_{n+2} \subseteq \m \F_{n+1}$. As the sequence $0 \rt \G_{n-1} \rt \F_{n+1} \rt \F_{n-1} \rt 0$ is split  we get $L_n \subseteq \m \F_{n+1} \cap G_{n-1} = \m \G_{n-1}$. So $\G[n_0] \rt L_{n_0}$ is a  minimal  resolution of $L_{n_0}$. It is elementary to see that  $L_n = \Syz^A_{n -n_0}(L_{n_0})$ for all $n \geq n_0$.

(4) The commutative diagram holds since $\xi$ is a chain map. Let \\ $C_m = \{c_1, \ldots, c_{s_m}\}$ and $V_m = \{v_1, \ldots, v_{l_m}\}$  be basis of $\G_m$ and $\F_m$ for $m =n, n-1$. Then
choose a basis of $\F_{m+2}$ (for $m = n-1, n$) as  $ \{c_1, \ldots, c_{s_m}, v_1^\prime , \ldots, v^\prime_{l_m}\}$ where $\xi$ maps $v_i^\prime$ to $v_i$. Clearly the matrix of $\partial_{n+2}$ with respect to this basis is of the required form.
\end{proof}

\section{Proof of Theorem \ref{main}}
In this section we give
\begin{proof}
[Proof of Theorem \ref{main}] We may assume $M$ is a maximal \CM  \\ $A$-module. We also assume the residue field of $A$ is infinite. We give a proof by induction on $\cx_A M$. When $\cx_A M = 1$  the result follows from \ref{cx1}. We now assume $r = \cx_A M  \geq 2$ and the result is proved for modules with complexity $\leq r -1$. We do the construction as in Theorem \ref{section}. Let $(\F, \partial)$ be a minimal resolution of
$M$. Then there exists $n \geq n_0$ such that we have exact sequence $0 \rt L_n \rt M_{n+2} \rt M_n \rt 0$ for all $n \geq n_0$. We have $\cx L_{n_0} = r -1$ and for all $n \geq n_0$ we have $L_n = \Syz^A_{n-n_0}(L_{n_0})$.
We also have that $(\G[n_0], \delta)$ is a minimal resolution of $L_{n_0}$. So by our  induction hypothesis we have $\ord_A(\delta_n) \leq \ord_Q(f_1) - 1$ for all $n \gg 0$, say $n \geq m_0$.

 Furthermore for $n \geq n_0 +1$ we have that after  appropriately choosing basis we have
  that  the matrix of $\partial_{n+2}$ is
\[
[\partial_{n+2}] =
\begin{pmatrix}
[\delta_n] & U_n \\ 0 & [\partial_n]
\end{pmatrix}.
\]
It follows that for $n > \max\{n_0 + 1, m_0\}$ we have $\ord_A(\partial_{n + 2}) \leq \ord_Q(f_1) - 1$. The result follows.
\end{proof}

\section{Construction of Example \ref{ex}}

In this section we give construction of Example \ref{ex}.

\s \emph{Construction:} Let $(Q,\n)$ be regular local. Assume that $\cf = f_1, \ldots f_c \in \n^2$ is a $Q$ regular sequence and $\cf^* = f_1^*,\ldots, f_c^*$ is a $G(Q)$-regular sequence. Assume $\ord_Q(f_1) \geq \ord_Q(f_2) \geq \cdots \geq \ord_Q(f_c)$.  Set
$A = Q/(\cf)$. We may assume $\ord_Q(f_1) \geq 2$ otherwise $A$ is regular ring (and we have nothing to show in this case).

We note that the hypersurface $B = Q/(f_1)$ has an Ulrich module $U$, \cite[Theorem, p.\ 189]{HBU}. Set $N = \Syz^B_1(U)$. Let $0 \rt G \xrightarrow{\phi} H \rt N \rt 0$ be a minimal presentation of $N$ over $Q$. Furthermore $N$ and $\phi$ induces a matrix factorization $(\phi, \psi)$ of $f$.
 Then by \cite[4.10]{P2},  we have $\ord_Q(\phi) = \ord_Q(f_1) - 1$.

 Set $M = N/(f_2,\ldots, f_c)N$ and $R = Q/(f_2, \ldots, f_c)$. Then
$\projdim_R M = 1$. Then $(\ov{\phi}, \ov{\psi})$ induces a matrix factorization of $f_1$ over $R$. It follows that \\  $\ord_R(\ov{\phi}), \ord_R({\psi}) \leq \ord_R(f_1) - 1 = \ord_Q(f_1) - 1$.
As $\ord_Q(\phi) = \ord_Q(f_1) - 1$, it follows that $\ord_R(\ov{\phi}) = \ord_R(f_1)- 1$. Furthermore we have the following  periodic minimal resolution of $M$ over $A$,
$$  \cdots \rt \ov{G} \xrightarrow{\ov{\phi}} \ov{H} \xrightarrow{\ov{\psi}} \ov{G}\rt \cdots \rt\ov{G} \xrightarrow{\ov{\phi}} \ov{H} \rt M \rt 0. $$
  We also note that if $a \in P$ has order $\leq \ord_P(f_1) -1$ then its order in $A$ is also $\leq \ord_P(f_1) -1$, see \ref{ini}. It follows that
$\ord_A(\ov{\phi}) =  \ord_P(f_1) - 1 = \ord_Q(f_1) - 1$. The result follows.

\section{Proof of Theorem \ref{minor}}
In this section we give
\begin{proof}[Proof of Theorem \ref{minor}] We may assume that the residue field of $A$ is infinite. We may also assume that $\projdim M = \infty$ otherwise the result hold vacuously.
We may also assume that $M$ is a MCM $A$-module with no free-summands.
We induct on $\cx M$. If $\cx M = 1$ then  $M$ is periodic with period $\leq 2$. So there is nothing to show.
We now assume that $\cx_A M \geq 2$. Fix $r \geq 1$. Let $(\F,\partial)$ be a minimal resolution of $M$. By \ref{section} we have that for $n \gg 0$, after choosing appropriate basis, the matrix of $\partial_{n+2}$ is given as
\[
[\partial_{n+2}] =
\begin{pmatrix}
[\delta_n] & U_n \\ 0 & [\partial_n]
\end{pmatrix}.
\]
So $I^r_n(M) \subseteq I^r_{n+2}(M)$. Thus for $n \gg 0$ we have a chain
\[
I^r_n(M) \subseteq I^r_{n+2}(M)  \subseteq I^r_{n +4}(M) \subseteq I^r_{n+6}(M) \subseteq \cdots
\]
As $A$ is Noetherian this chain stabilizes. The result follows.
\end{proof}

\end{document}